\newtheorem{theorem}{Theorem}[section]
\newtheorem{lemma}[theorem]{Lemma}
\theoremstyle{definition}
\newtheorem{definition}[theorem]{Definition}
\newtheorem{corollary}[theorem]{Corollary}
\newtheorem{proposition}[theorem]{Proposition}
\theoremstyle{remark}
\numberwithin{equation}{section}
\begin{document}

\title{Minimal free resolution of the associated graded ring of certain  monomial curves}

\author{P{\i}nar METE}
\address{Department of Mathematics,
Bal{\i}kesir University, Bal{\i}kesir , 10145 Turkey}
\email{pinarm@balikesir.edu.tr}
\author{Esra Emine ZENG\.{I}N}
\address{Department of Mathematics,
Bal{\i}kesir University, Bal{\i}kesir , 10145 Turkey}
\email{esrazengin103@gmail.com}
\subjclass{Primary 13H10, 14H20; Secondary 13P10} \keywords{Minimal free resolution, monomial curve, Cohen-Macaulayness, Hilbert function of a local ring, tangent cone}

\date{\today}

\begin{abstract}
In this article, we give the explicit minimal free resolution of  the associated graded ring of certain affine monomial curves in affine 4-space based on the standard basis theory. As a result, we give the minimal graded free resolution and compute the Hilbert function of the tangent cone of these families.
\end{abstract}

\maketitle
\section{Introduction}
In this article, we study the minimal free resolution of the associated graded ring of the local ring $A$ of a monomial curve
$C \subset \mathbb{A}^{4}$ corresponding to an arithmetic sequence based on the standard basis theory.
The associated graded ring $G=gr_{m}(A)= \bigoplus_{i=0}^{\infty} (m^{i} / m^{i+1})$ of $A$ with maximal ideal $m$ is a standard graded $k$-algebra. Since it corresponds to the important geometric construction, it has been studied  to get comprehensive information on the local ring (see \cite{rossi-valla,rossi-sharifan,molinelli-tamone1,molinelli-tamone2,molinelli-patil-tamone}).
Because the minimal finite free resolution of a finitely generated $k$-algebra is a very useful tool to extract information about the algebra, finding an explicit minimal free resolution of a standard $k$-algebra  is a basic problem. This difficult problem has been extensively studied in the case of affine monomial curves \cite{sharifan-nahandi,sengupta,gimenez-sengupta-srinivasan,oneto-tamone,barucci-froberg-sahin}.

We recall that  a monomial affine curve $C$ has a parametrization
\[x_0=t^{m_0}, \; x_1=t^{m_1}, \;\ldots, \; x_n=t^{m_n}\]
where $m_0,m_1,\ldots,m_n$ are positive integers with
$gcd(m_0,m_1,...,m_n)=1$. The additive semigroup, which is denoted by
$$<m_0,m_1,...,m_n>=\{\; \sum_{0 \leq i \leq n} \mathbb{N}m_i \; \mid \; \mathbb{N}=
\{0,1,2,\ldots \} \}$$
\noindent generated minimally by $m_0,m_1,...,m_n$, i.e., $m_j \notin \sum_{0 \leq i \leq n; i \neq j} \mathbb{N}m_i$ for $i \in \{0,\ldots,n\}$.

Assume that $m_0,m_1,\ldots,m_n$ be positive integers such that $0 < m_0 < m_1 < \ldots < m_n$ and $m_i=m_0 + id$ for 
every $1 \leq i \leq n$, where $d$ is the common difference, i.e. the integers $m_i$'s form an arithmetic progression.
The monomial curve which is defined parametrically by 
\[x_0=t^{m_0}, \; x_1=t^{m_1}, \;\ldots, \; x_n=t^{m_n}\]
\noindent such that $0 < m_0 < m_1 < \ldots < m_n$ form an arithmetic progression is called a certain monomial curve.

In order to study the associated graded ring 
of a monomial curve $C$ at the origin, it is possible to consider
either the associated graded ring of
$A=k[[t^{m_0},t^{m_1},...,t^{m_n}]]$ with respect to the maximal
ideal $m=(t^{m_0},t^{m_1},...,t^{m_n})$ which is denoted by
$gr_m(k[[t^{m_0},t^{m_1},...,t^{m_n}]])$, or the ring
$k[x_0,x_1,...,x_n]/I(C)^{*}$, where $I(C)^{*}$ is the ideal generated by the polynomials $f^{*}$
for $f$ in $I(C)$, where $f^{*}$ is the homogeneous summand of $f$
of the least degree,  since they are isomorphic.  We recall that $I(C)^{*}$ is the defining ideal of the tangent cone of the curve $C$ at the origin.

Our main aim in this paper is to give  an explicit minimal free resolution of the associated graded ring for certain monomial curves in affine 4-space. Even if one can obtain the numerical invariants of the minimal free resolution of the tangent cone of certain monomial curves in $\mathbb{A}^{4}$  by using the Theorem 4.1 and Proposition 4.6 in \cite{sharifan-nahandi}, we give the  minimal free resolution of the tangent cone of certain monomial curves in affine 4-space in an explicit form by giving a new proof based on the standard basis theory. 
Using the standard basis theory and knowing the minimal generating set  of binomial generators of the defining ideal of certain monomial curve  explicitly from  \cite{patil}, we find the minimal generators of the tangent cone of a certain monomial curve in affine 4-space. By knowing the minimal generators, we show the Cohen-Macaulayness of the tangent cone of these families of curves. We obtain explicit minimal free resolution by using Schreyer's theorem but prove it using the Buchsbaum-Eisenbud theorem \cite{eisenbud-buchsbaum}.
Finally, we give the minimal graded free resolutions and as a corollary compute the Hilbert function of the tangent cones for these families. All computations have been carried out using {\footnotesize SINGULAR}\cite{singular}.

\section{Minimal generators of the associated graded ring}

In this section, we find the minimal generators of the tangent
cone of the certain  monomial curve $C$ having the defining ideal as in Theorem
4.5 in \cite{patil} in affine 4-space. First, we recall the theorem which gives the construction
of the minimal set of generators for the defining ideal of  certain affine monomial curve in 
$\mathbb{A}^{4}$ .

Let $m_0 < m_1 < m_2 < m_3$ be positive integers with $gcd(m_0,m_1,m_2,m_3)=1$ and
assume that  $m_0,m_1,m_2,m_3$ form an arithmetic progression with common difference $d$. 
Let $R=k[x_0,x_1,x_2,y]$ be a polynomial ring over the field $k$.  We use $y$ instead of $x_3$ by
following the same notation in \cite{sengupta,sengupta2,patil}. Let $\phi : R \rightarrow k[t^{m_0},t^{m_1},t^{m_2},t^{m_3}]$ be the $k$-algebra homomorphism defined by
\begin{center}
$\phi(x_0)=t^{m_0}$, $\phi(x_1)=t^{m_1}$, $\phi(x_2)=t^{m_2}$, $\phi(y)=t^{m_3}$ 
\end{center}
and $I(C)=Ker (\phi)$. Let us write $m_{0}=3a+b$ such that $a$ and $b$ are positive integers $a \geq 1$ and $b \in [1,3]$.
In \cite{sengupta},  the following theorem is given as a definition.

\begin{theorem} \label{thm1} {\rm \cite{patil}} Let \\[1mm]
\indent\hspace{1cm} $\xi_{11} := x_1^{2}-x_{0}x_{2},$\\[1mm]
\indent\hspace{1cm} $\varphi_{i} := x_{i+1}x_2-x_{i}y,\; for \; 0 \leq i \leq 1.$\\[1mm]
\indent\hspace{1cm} $\psi_{j} := x_{b+j}y^{a}-x_{0}^{a+d}x_{j},\; if \; 1 \leq b \leq 2 \;\; and \; \; 0 \leq j \leq 2-b.$\\[1mm]
\indent\hspace{1cm} $\theta := y^{a+1}-x_{0}^{a+d}x_{3-b}$. \\[1mm]
\indent\hspace{1cm} $\displaystyle
G :=\left\{
\begin{array}{lr}
\{\xi_{11} \}  \cup \{ \varphi_{0}, \varphi_{1}\} \cup \{ \psi_{0}, \psi_{1}\} \cup \{ \theta\}& ~ if\;\;\; b=1, \\
\{\xi_{11} \}  \cup \{ \varphi_{0}, \varphi_{1}\} \cup \{ \psi_{0}\} \cup \{ \theta\}& ~ if\;\;\; b=2, \\
\{\xi_{11} \}  \cup \{ \varphi_{0}, \varphi_{1}\}  \cup \{ \theta\}& ~ if\;\;\; b=3.
\end{array}
\right.
$\\[1mm]

\noindent then, G is a minimal generating set for the defining ideal  $I(C)$.
\end{theorem}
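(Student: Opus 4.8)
The plan is to realize $G$ as a minimal generating set of the toric ideal $I(C)=\ker\phi$ through a standard-basis argument. I would grade $R=k[x_0,x_1,x_2,y]$ by $\deg x_i=m_i$ (the $t$-degree), so that $I(C)$ is homogeneous and $R/I(C)\cong A=k[t^{m_0},\dots,t^{m_3}]$ satisfies $\dim_k (R/I(C))_s=1$ for $s\in S:=\langle m_0,m_1,m_2,m_3\rangle$ and $=0$ otherwise. The inclusion $(G)\subseteq I(C)$ is routine: each listed binomial is a difference of two monomials of equal degree, which one verifies directly from $m_i=m_0+id$ and $m_0=3a+b$ (for instance $\theta$ has degree $(a+1)m_3=(a+d)m_0+m_{3-b}$, and $\psi_j$ has degree $m_{b+j}+a\,m_3=(a+d)m_0+m_j$). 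Hence there is a surjection of graded algebras $R/(G)\twoheadrightarrow A$, and the generation claim becomes the statement that this map is an isomorphism in every degree.

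For that I would fix a (global) monomial order refining a weight vector that gives $x_0$ sufficiently small weight while keeping $2w_1>w_0+w_2,\ w_1+w_2>w_0+w_3,\ 2w_2>w_1+w_3$ (e.g.\ weights $(\varepsilon,4,7,8)$ on $x_0,x_1,x_2,y$). One checks that the leading terms are then exactly $x_1^2,\ x_1x_2,\ x_2^2$, the monomials $x_{b+j}y^{a}$ of the $\psi_j$ (when present), and $y^{a+1}$ of $\theta$, because the $x_0$-heavy monomials become smallest. By the division algorithm, the monomials outside the ideal generated by these leading terms span $R/(G)$ over $k$, so it suffices to bound, in each degree, the number of such \emph{standard monomials}. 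These are the $x_0^{\alpha}x_1^{\beta}x_2^{\gamma}y^{\delta}$ with $\beta+\gamma\le 1$ and a case-dependent cap on $\delta$ ($\delta\le a$, lowered to $\delta\le a-1$ precisely on the supports where $x_{b+j}y^{a}$ is a leading term); a direct count gives exactly $3a+b=m_0$ standard monomials with $\alpha=0$ in each of the cases $b=1,2,3$.

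The crux is to show that $x^{u}\mapsto \deg x^{u}\in S$ is injective on standard monomials; surjectivity onto $S$ is automatic, since reducing any monomial of value $s$ modulo $G$ preserves degree and terminates at standard monomials. Writing a standard monomial as $x_0^{\alpha}p$ with $p$ its $x_0$-free part and using $m_i\equiv id \pmod{m_0}$, I would compute $\deg p \equiv d\,r \pmod{m_0}$, where $r$ ranges (over the three supports, with the stated $\delta$-caps) exactly over $\{0,1,\dots,m_0-1\}$; this is where the bookkeeping $m_0=3a+b$ is used. Since $\gcd(m_0,d)=1$ (it divides every $m_i$, hence the overall gcd $1$), multiplication by $d$ permutes the residues mod $m_0$, so distinct $x_0$-free parts have distinct residues. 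Therefore $\deg(x_0^{\alpha}p)=\deg(x_0^{\alpha'}p')$ forces $\deg p\equiv\deg p'$, whence $p=p'$ and then $\alpha=\alpha'$. This gives $\dim_k (R/(G))_s\le 1$ for every $s$, and with the surjection onto $A$ we conclude $R/(G)\cong A$, i.e.\ $(G)=I(C)$. I expect this residue computation, organized uniformly across $b=1,2,3$, to be the main obstacle — the difficulty is not a single estimate but arranging the $\delta$-ranges so that $r$ fills $\{0,\dots,m_0-1\}$ exactly.

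It remains to prove minimality. Now that $R/(G)\cong A$, the standard monomials form a $k$-basis and $G$ is a Gröbner basis, so the initial ideal is generated by $x_1^2,x_1x_2,x_2^2,x_{b+j}y^{a},y^{a+1}$, which pairwise do not divide one another; thus $G$ is at least a minimal Gröbner basis. To upgrade this to minimality as a generating set I would argue degreewise: since the generators are homogeneous, a relation $g_i=\sum_{j\ne i}p_j g_j$ lives in the single degree $\deg g_i$, and I would rule it out by the standard indispensability criterion for binomial ideals — checking that in the fiber $\phi^{-1}(\deg g_i)$ the two monomials of $g_i$ lie in distinct connected components once the edges contributed by the strictly-lower-degree generators are removed. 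The explicit Apéry-type description of the standard monomials obtained above makes this a finite and comparatively routine verification, completing the proof that $G$ is a minimal generating set of $I(C)$.
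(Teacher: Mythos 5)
The first thing to note is that the paper does not prove this statement at all: Theorem~\ref{thm1} is imported verbatim from \cite{patil}, and the authors' own work begins only afterwards (Lemma~\ref{lemma1}, showing $G$ is a standard basis for the local ordering $>_{ds}$, and its consequences for the tangent cone). So there is no proof in the paper to compare yours against; the relevant comparison is with Patil's original argument, which determines the relation ideal of a semigroup generated by an arithmetic sequence through the semigroup-theoretic standard basis, i.e.\ the Ap\'ery set with respect to $m_0$. Your route is recognizably a Gr\"obner-basis rendering of that same idea: your $x_0$-free standard monomials $x_1^{\beta}x_2^{\gamma}y^{\delta}$ (with $\beta+\gamma\le 1$ and the case-dependent caps on $\delta$) are exactly representatives of the Ap\'ery set, and your residue computation is the Ap\'ery bijection. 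I checked the core step in all three cases: the counts are $(a+1)+a+a=3a+1$ for $b=1$, $(a+1)+(a+1)+a=3a+2$ for $b=2$, $3(a+1)=3a+3$ for $b=3$, and the values $r=\beta+2\gamma+3\delta$ sweep $\{0,\dots,m_0-1\}$ exactly once; combined with $\gcd(d,m_0)=1$ this gives your injectivity, the weight order $(\varepsilon,4,7,8)$ does produce the claimed leading terms, and the spanning/linear-independence argument then correctly yields $R/(G)\cong A$, i.e.\ $(G)=I(C)$. This half of your proposal is sound and essentially complete.

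The genuine gap is minimality. As you acknowledge, the fact that the leading terms form a minimal Gr\"obner basis says nothing about minimal generation, and the fiber-graph indispensability check you propose is asserted to be ``finite and comparatively routine'' but never carried out --- and it is not quite routine, because the fibers $\phi^{-1}(\deg g_i)$ depend on the parameters $a$ and $d$, so the verification must be done symbolically and uniformly in these parameters (e.g.\ for $\xi_{11}$ one must argue that $\phi^{-1}(2m_1)=\{x_1^2,x_0x_2\}$, using $\gcd(d,m_0)=1$ and the fact that $0<m_0-d<m_0$ cannot lie in the semigroup, and that $I(C)$ vanishes in all smaller degrees; analogous but longer analyses are needed for the $\psi_j$ and $\theta$, whose degrees involve $a$ and $d$). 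Equivalently, having proved $(G)=I(C)$, minimality amounts by graded Nakayama to linear independence of the images of the $g_i$ in $I/\mathfrak{m}I$, which requires exactly this fiber analysis. So your proposal fully establishes ``generating set'' but only outlines ``minimal''; as a plan it is viable, but as a proof it is incomplete at precisely that point.
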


Now, we recall  the definition of the negative degree reverse lexicographical
ordering among the other local orderings.

\begin{definition} {\rm \cite[p.14]{greuel-pfister}} (negative degree reverse
lexicographical ordering)
\begin{center}
$x^{\alpha} >_{ds} x^{\beta}:\Leftrightarrow$ $degx^{\alpha}<degx^{\beta}$,
where $degx^{\alpha}=\alpha_{1}+...+\alpha_{n}$, \\
or $(degx^{\alpha}=degx^{\beta}$ and $\exists 1\leq i\leq n:
\alpha_{n}=\beta_{n},...,\alpha_{i+1}=\beta_{i+1},\alpha_{i}<\beta_{i}).$
\end{center}
\end{definition}

In the following Lemma, we show that the above set G is also standard basis with respect to $>_{ds}$.

\begin{lemma}\label{lemma1}
The minimal set $G$ is a standard basis with
respect to the negative degree reverse lexicographical ordering
$>_{ds}$ with $x_0 > x_1 > x_2 > y$.
\end{lemma}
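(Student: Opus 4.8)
The plan is to invoke the local analogue of Buchberger's criterion for standard bases (see \cite{greuel-pfister}): a finite set $G$ is a standard basis with respect to a local ordering exactly when, for every pair $g,g'\in G$, a weak normal form of the $S$-polynomial $\mathrm{spoly}(g,g')$ modulo $G$ — computed by Mora's normal form (tangent cone) algorithm — is zero. Thus the whole argument reduces to a finite $S$-polynomial check, once the leading monomials are identified. The first step is to pin these down under $>_{ds}$. The crucial feature of $>_{ds}$ is that it ranks a monomial of \emph{lower} total degree higher, so the leading term of each binomial generator is its lower-degree monomial, ties being broken by reverse lex. Direct inspection then gives $\mathrm{LM}(\xi_{11})=x_1^2$, $\mathrm{LM}(\varphi_0)=x_1x_2$, $\mathrm{LM}(\varphi_1)=x_2^2$, $\mathrm{LM}(\psi_j)=x_{b+j}y^a$, and $\mathrm{LM}(\theta)=y^{a+1}$; here the three quadrics are settled by reverse lex, while for $\psi_j$ and $\theta$ the degree comparison is decisive because $a+d+1>a+1$ as $d\ge 1$. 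Note that $x_0$ occurs only in trailing terms and is never a leading variable.

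Next I would discard the bulk of the pairs by the product (coprimality) criterion, which is also valid for standard bases: whenever $\mathrm{LM}(g)$ and $\mathrm{LM}(g')$ involve disjoint variables, $\mathrm{spoly}(g,g')$ has normal form zero. Since the leading monomials above live essentially in the two variable blocks $\{x_1,x_2\}$ and $\{y\}$, this removes every pair except those sharing a variable: the overlaps inside $\{x_1^2,x_1x_2,x_2^2\}$, the overlaps of an $x_{b+j}y^a$ with $y^{a+1}$ or with a quadric when $b+j\in\{1,2\}$, and the pair of the two $\psi$'s occurring when $b=1$. Organizing these into the three cases $b=1,2,3$ of Theorem \ref{thm1} keeps the list manageable.

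The remaining work is to run Mora's weak normal form on each surviving $S$-polynomial and verify it vanishes. The binomial shape makes this tractable: each such $S$-polynomial turns out to be a monomial multiple of another generator, or reduces to one in a couple of steps. For the quadrics one finds, for instance, $\mathrm{spoly}(\xi_{11},\varphi_0)=x_2\,\xi_{11}-x_1\,\varphi_0=-x_0\,\varphi_1$ and $\mathrm{spoly}(\varphi_0,\varphi_1)=x_2\,\varphi_0-x_1\,\varphi_1=y\,\xi_{11}$, so the quadric overlaps reduce onto one another. The overlaps involving $y$ collapse onto the $\varphi_i$ or $\theta$ up to a power of $x_0$; e.g. in the case $b=1$ one computes $\mathrm{spoly}(\psi_0,\theta)=x_0^{a+d}\varphi_0$ and $\mathrm{spoly}(\psi_1,\varphi_1)=x_1\,\theta$. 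I would present one or two such reductions in full and indicate that the rest follow the same pattern, each intermediate remainder staying a binomial until it cancels.

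The main obstacle is precisely the case bookkeeping together with the subtlety of reduction under a local ordering. Unlike the homogeneous polynomial situation, Mora's algorithm must track the ecart and may enlarge the reductor set while reducing, because the higher-degree trailing terms of the binomials re-enter the computation; a few $S$-polynomials (such as $\mathrm{spoly}(\psi_1,\varphi_0)$ for $b=1$) require two reduction steps and call on both $\theta$ and $\xi_{11}$, with $x_0^{a+d}$-multiples appearing. Verifying termination-at-zero in each of these $\psi$- and $\theta$-overlaps, uniformly across $b\in\{1,2,3\}$ and the admissible $j$, is where the real care is needed; once all surviving $S$-polynomials are shown to reduce to zero, Buchberger's criterion yields that $G$ is a standard basis with respect to $>_{ds}$.
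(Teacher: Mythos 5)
Your proposal is correct and follows essentially the same route as the paper's proof: identify the leading monomials under $>_{ds}$ (lower total degree wins, reverse lex breaks ties, so $x_0$ never leads), discard pairs with coprime leading monomials by the product criterion, and reduce the remaining $S$-polynomials to zero with Mora's weak normal form, case by case for $b=1,2,3$. Your sample reductions (e.g.\ ${\rm spoly}(\xi_{11},\varphi_0)=-x_0\varphi_1$, ${\rm spoly}(\varphi_0,\varphi_1)=y\,\xi_{11}$, ${\rm spoly}(\psi_0,\theta)=x_0^{a+d}\varphi_0$, and the two-step reduction of ${\rm spoly}(\varphi_0,\psi_1)$ via $\theta$ and $\xi_{11}$) agree exactly with the computations carried out in full in the paper.
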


\noindent {\em Proof.}
We apply the standard basis algorithm 
to the set $G$. We will prove for $b=1,2$, and $3$, respectively.
By using the notation in \cite{greuel-pfister}, we denote the leading monomial of a polynomial $f$
by $LM(f)$, the S-polynomial of the polynomials $f$ and $g$ by $spoly(f,g)$ and the
Mora's polynomial weak normal form of $f$ with respect to G by $NF(f  \mid  G)$.\\

\noindent {\bf Case \textit{b} $=$ 1.}\\
From the minimal generating set $G$ in Theorem~\ref{thm1}, we obtain
$$G = \big\{\xi_{11} = x_1^{2}-x_{0}x_{2},\;\; \varphi_{0} = x_{1}x_2-x_{0}y,\;\; \varphi_{1} = x_{2}^2-x_{1}y,$$
$$\psi_{0} = x_{1}y^{a}-x_{0}^{a+d+1},\;\; \psi_{1} = x_{2}y^{a}-x_{0}^{a+d}x_{1},\;\; \theta = y^{a+1}-x_{0}^{a+d}x_{2}\big\}.$$

\noindent Recalling that the ordering is the negative degree reverse lexicographical ordering, we have
$LM(\xi_{11}) = x_1^{2}$, $LM(\varphi_{0})$ = $x_{1}x_{2}$, $LM(\varphi_{1}) = x_2^{2}$,
$LM(\psi_{0}) = x_{1}y^{a}$, $LM(\psi_{1})$ = $x_{2}y^{a}$ and $LM( \theta) = y^{a+1}$.

We begin with $\xi_{11}$ and $\varphi_{0}$. $LM(\xi_{11})$ = $x_1^{2}$ and
$LM(\varphi_{0}) = x_{1}x_{2}$. We compute
${\rm spoly}(\xi_{11}, \varphi_{0})$ = $x_{0}x_{1}y-x_{0}x_{2}^2$. 
$LM({\rm spoly}(\xi_{11}, \varphi_{0}))$ = $x_{0}x_{2}^2$. Among the leading monomials of the elements of $G$,
only $LM(\varphi_{1})$ divides $LM({\rm spoly}(\xi_{11}, \varphi_{0}))$.
Also ecart$(\varphi_{1})$=ecart$({\rm spoly}(\xi_{11}, \varphi_{0}))$ = 0.
${\rm spoly}(\varphi_{1}, {\rm spoly}(\xi_{11}, \varphi_{0}))$ = 0 implies
$NF({\rm spoly}(\xi_{11},\varphi_{0})\,\vert\,G)$ = 0. 

Next, we choose $\xi_{11}$ and $\varphi_{1}$. Since ${\rm lcm}(LM(\xi_{11}), LM(\varphi_{1}))$
= $LM(\xi_{11}).LM(\varphi_{1})$, then $NF({\rm spoly}(\xi_{11}, \varphi_{1})\,\vert\,\{\xi_{11}, \varphi_{1}\})$ = 0.
This implies that $NF({\rm spoly}(\xi_{11}, \varphi_{1})\,\vert\,G)$ = 0. 
In the same manner, $NF({\rm spoly}(\xi_{11}, \psi_{1})\,\vert\,G)$ = 0,
$NF({\rm spoly}(\xi_{11}, \theta)\,\vert\,G)$ = 0, $NF({\rm spoly}(\varphi_{0},\theta)\,\vert\,G)$ = 0, 
$NF({\rm spoly}(\varphi_{1},\psi_{0})\,\vert\,G)$ = 0
 and
$NF({\rm spoly}(\varphi_{1},\theta)\,\vert\,G)$ = 0.

Now, we compute S-polynomial of $\xi_{11}$ and $\psi_{0}$. 
${\rm spoly}(\xi_{11}, \psi_{0}) = x_{0}^{a+d+1}x_{1}-x_{0}x_{2}y^{a}$. 
Among the leading monomials of the elements of $G$,
only $LM(\psi_{1})$ divides $LM({\rm spoly}(\xi_{11}, \psi_{0}))$ = $x_{0}x_{2}y^{a}$.
Also ecart$(\psi_{1})$=ecart$({\rm spoly}(\xi_{11}, \psi_{0}))$ = $d$.
${\rm spoly}(\psi_{1}, {\rm spoly}(\xi_{11}, \psi_{0}))$ =0 implies
$NF({\rm spoly}(\xi_{11},\psi_{0})\,\vert\,G)$ = 0. 

Again, we compute S-polynomial of $\varphi_{0}$ and $\varphi_{1}$. 
${\rm spoly}(\varphi_{0}, \varphi_{1})$ = $x_{1}^{2}y-x_{0}x_{2}y$. 
Among the leading monomials of the elements of $G$,
only $LM(\xi_{11})$ divides\\ 
$LM({\rm spoly}(\varphi_{0}, \varphi_{1}))$ = $x_{1}^2y$.
Also ecart$(\xi_{11}) = $ecart$({\rm spoly}(\varphi_{0}, \varphi_{1}))$ = 0.
${\rm spoly}(\xi_{11}, {\rm spoly}(\varphi_{0}, \varphi_{1}))$ = 0 implies
$NF({\rm spoly}(\varphi_{0},\varphi_{1})\,\vert\,G)$ = 0. 

Now choose $\varphi_{0}$ and $\psi_{0}$. Then, S-polynomial of $\varphi_{0}$ and $\psi_{0}$ is
${\rm spoly}(\varphi_{0}, \psi_{0})=x_{0}^{a+d+1}x_{2}-x_{0}y^{a+1}$. 
Once again, only $LM(\theta)$ divides $LM({\rm spoly}(\varphi_{0}, \psi_{0}))$ = $x_{0}y^{a+1}$
among the leading monomials of the elements of $G$. 
Also ecart$(\theta)$ = ecart$({\rm spoly}(\varphi_{0}, \psi_{0}))$ = $d$.
${\rm spoly}(\theta, {\rm spoly}(\varphi_{0}, \psi_{0}))$ = 0 implies 
$NF({\rm spoly}(\varphi_{0},\psi_{0})\,\vert\,G)$ = 0. 

Similarly, ${\rm spoly}(\varphi_{0}, \psi_{1}) = x_{0}^{a+d}x_{1}^{2}-x_{0}y^{a+1}$. 
Again, as in the previous case $LM(\theta)$ divides $LM({\rm spoly}(\varphi_{0}, \psi_{1}))$ = $x_{0}y^{a+1}$.
Also ecart$(\theta)$ = ecart$({\rm spoly}(\varphi_{0}, \psi_{1}))$ = $d$.
${\rm spoly}(\theta, {\rm spoly}(\varphi_{0}, \psi_{1}))$ = $x_{0}^{a+d}x_{1}^{2}-x_{0}^{a+d+1}x_2.$ 
Among the leading monomials of the elements of $G$,
only $LM(\xi_{11}) = x_{1}^2$ divides $LM({\rm spoly}({\rm spoly}(\theta, {\rm spoly}(\varphi_{0}, \psi_{1}))))$ = $x_{0}^{a+d}x_{1}^2$.
ecart$(\xi_{11})$ = ecart$({\rm spoly}(\theta, {\rm spoly}(\varphi_{0}, \psi_{1})))$ = 0.
${\rm spoly}(\xi_{11}, {\rm spoly}(\theta, {\rm spoly}(\varphi_{0}, \psi_{1})))$ = 0 implies
$NF({\rm spoly}(\varphi_{0},\psi_{1})\,\vert\,G)$ = 0. 

Similarly, we compute ${\rm spoly}(\varphi_{1}, \psi_{1})$ = $x_{0}^{a+d}x_{1}x_{2}-x_{1}y^{a+1}$.
Among the leading monomials of the elements of $G$,
only $LM(\psi_{0})$  and $LM(\theta)$ divides $LM({\rm spoly}(\varphi_{1}, \psi_{1}))$ = $x_{1}y^{a+1}$.
Note that ecart$(\psi_{0})$ = ecart$(\theta) = d$.
Firstly, beginning with $\psi_{0}$,
${\rm spoly}(\psi_{0}, {\rm spoly}(\varphi_{1}, \psi_{1}))$ = $x_{0}^{a+d}x_{1}x_{2}-x_{0}^{a+d+1}y$.
Among the leading monomials of the elements of $G$, $LM(\varphi_{1}) = x_{1}x_{2}$ divides
$LM({\rm spoly}(\psi_{0}, {\rm spoly}(\varphi_{1}, \psi_{1})))$.
Also ecart$(\varphi_{1})$ = ecart$({\rm spoly}(\psi_{0}, {\rm spoly}(\varphi_{1}, \psi_{1})))$ = 0.
${\rm spoly}(\varphi_{1}, {\rm spoly}(\psi_{0}, {\rm spoly}(\varphi_{1}, \psi_{1})))$ = 0.
Secondly, taking $\theta$, ${\rm spoly}(\theta, {\rm spoly}(\varphi_{1}, \psi_{1}))$ = 0.
Thus, $NF({\rm spoly}(\varphi_{1},\psi_{1})\,\vert\,G)$ = 0.

We continue by computing ${\rm spoly}(\psi_{0}, \psi_{1})=x_{0}^{a+d}x_{1}^2-x_{0}^{a+d+1}x_{2}$.
$LM(\xi_{11}) = x_{1}^{2}$ divides $LM({\rm spoly}(\psi_{0}, \psi_{1}))$ = $x_{0}^{a+d}x_{1}^{2}.$
Also ecart$(\xi_{11})$ = ecart$({\rm spoly}(\psi_{0}, \psi_{1}))$ = 0.
${\rm spoly}(\xi_{11}, {\rm spoly}(\psi_{0}, \psi_{1})$ = 0 implies
$NF({\rm spoly}(\psi_{0},\psi_{1})\,\vert\,G)$ = 0. 

In the same manner, ${\rm spoly}(\psi_{0}, \theta)$ = $x_{0}^{a+d}x_{1}x_{2}-x_{0}^{a+d+1}y$.
$LM(\varphi_{0}) = x_{1}x_{2}$ divides $LM({\rm spoly}(\psi_{0}, \theta))$ = $x_{0}^{a+d}x_{1}x_{2}.$
Also ecart$(\varphi_{0})$ = ecart$({\rm spoly}(\psi_{0}, \theta))$ = 0.
${\rm spoly}(\varphi_{0}, {\rm spoly}(\psi_{0}, \theta))$ = 0 implies
$NF({\rm spoly}(\psi_{0},\theta)\,\vert\,G)$ = 0. 

Finally, we compute ${\rm spoly}(\psi_{1}, \theta)$ = $x_{0}^{a+d}x_{2}^{2}-x_{0}^{a+d}x_{1}y$.
$LM(\varphi_{1})$ = $x_{2}^{2}$ divides $LM({\rm spoly}(\psi_{1}, \theta))$ = $x_{0}^{a+d}x_{2}^{2}.$
Also ecart$(\varphi_{1})$ = ecart$({\rm spoly}(\psi_{1}, \theta))$ = 0.\\
${\rm spoly}(\varphi_{1}, {\rm spoly}(\psi_{1}, \theta))$ = 0 implies
$NF({\rm spoly}(\psi_{0},\theta)\,\vert\,G)$ = 0.

\vspace{0.5cm}
\noindent {\bf Case \textit{b} $=$ 2.}\\
As in the previous case, we obtain by the minimal generating set $G$ in Theorem~\ref{thm1}, 
$$G=\big\{\xi_{11} = x_1^{2}-x_{0}x_{2},\;\; \varphi_{0} = x_{1}x_2-x_{0}y,\;\; \varphi_{1} = x_{2}^2-x_{1}y,$$
$$\psi_{0} = x_{2}y^{a}-x_{0}^{a+d+1},\;\;  \theta = y^{a+1}-x_{0}^{a+d}x_{1}\big\}.$$ 
$LM(\xi_{11}) = x_1^{2}$, $LM(\varphi_{0}) = x_{1}x_{2}$, $LM(\varphi_{1}) = x_{2}^{2}$,
$LM(\psi_{0}) = x_{2}y^{a}$ and $LM( \theta) = y^{a+1}$ with respect to
the negative degree reverse lexicographical ordering.

We begin with $\xi_{11}$ and $\varphi_{0}$. This case is exactly the same as in $b=1$.

Next, we choose $\xi_{11}$ and $\varphi_{1}$.  As in the first case, since 
${\rm lcm}(LM(\xi_{11}),LM(\varphi_{1}))$ = $LM(\xi_{11}).LM(\varphi_{1})$, then $NF({\rm spoly}(\xi_{11},\varphi_{1})\,\vert\,\{\xi_{11},\varphi_{1}\})$ = 0.
Therefore, this implies that $NF({\rm spoly}(\xi_{11},\varphi_{1})\,\vert\,G)$ = 0. 
In the same manner, $NF({\rm spoly}(\xi_{11}, \psi_{0})\,\vert\,G)$ = 0,
$NF({\rm spoly}(\xi_{11}, \theta)\,\vert\,G)$ = 0, $NF({\rm spoly}(\varphi_{0},\theta)\,\vert\,G)$ = 0 and
$NF({\rm spoly}(\varphi_{1},\theta)\,\vert\,G)$ =0.

Again, we compute S-polynomial of $\varphi_{0}$ and $\varphi_{1}$. 
This one is also the same as in the previous case.

Now choose $\varphi_{0}$ and $\psi_{0}$. Then, S-polynomial of $\varphi_{0}$ and $\psi_{0}$ is
${\rm spoly}(\varphi_{0}, \psi_{0})$ = $x_{0}^{a+d+1}x_{1}-x_{0}y^{a+1}$. 
Once again, only $LM(\theta) = y^{a+1}$ divides $LM({\rm spoly}(\varphi_{0}, \psi_{0}))$ = $x_{0}y^{a+1}$
among the leading monomials of the elements of $G$. 
Also, ecart$(\theta)$ = ecart$({\rm spoly}(\varphi_{0}, \psi_{0}))$ = $d$.
${\rm spoly}(\theta, {\rm spoly}(\varphi_{0}, \psi_{0}))$ = 0 implies
$NF({\rm spoly}(\varphi_{0},\psi_{0})\,\vert\,G)$ = 0. 

Similarly, we compute ${\rm spoly}(\varphi_{1}, \psi_{0})$ = $x_{0}^{a+d+1}x_{2}-x_{1}y^{a+1}$.
Among the leading monomials of the elements of $G$,
only  $LM(\theta)$ divides $LM({\rm spoly}(\varphi_{1}, \psi_{0}))$ = $x_{1}y^{a+1}$.
ecart$(\theta)$ = ecart$({\rm spoly}(\varphi_{1}, \psi_{0}))$ = $d$.
${\rm spoly}(\theta, {\rm spoly}(\varphi_{1}, \psi_{0}))$ = $x_{0}^{a+d+1}x_{2}-x_{0}^{a+d}x_{1}^{2}$.
Since ${\rm spoly}(\theta, {\rm spoly}(\varphi_{1}, \psi_{0}))$ is not zero, again among the leading 
monomials of the elements of $G$, $LM(\xi_{11})$ = $x_{1}^{2}$ divides 
$LM({\rm spoly}(\theta, {\rm spoly}(\varphi_{1}, \psi_{0})))$ = $x_{0}^{a+d}x_{1}^{2}$.
ecart$(\xi_{11})$ = ecart$({\rm spoly}(\theta, {\rm spoly}(\varphi_{1}, \psi_{0})))$ = 0.\\
${\rm spoly}(\xi_{11}, {\rm spoly}(\theta, {\rm spoly}(\varphi_{1}, \psi_{0})))$ = 0.
Thus, $NF({\rm spoly}(\varphi_{1},\psi_{0})\,\vert\,G)$ = 0.

Finally, we compute ${\rm spoly}(\psi_{0}, \theta)$ = $x_{0}^{a+d}x_{1}x_{2}-x_{0}^{a+d+1}y$.
$LM(\varphi_{0}) = x_{1}x_{2}$ divides $LM({\rm spoly}(\psi_{0}, \theta))$ = $x_{0}^{a+d}x_{1}x_{2}.$
Also ecart$(\varphi_{0})$ = ecart$({\rm spoly}(\psi_{0}, \theta))$ = 0.
${\rm spoly}(\varphi_{0}, {\rm spoly}(\psi_{0}, \theta))$ = 0 implies
$NF({\rm spoly}(\psi_{0},\theta)\,\vert\,G)$ = 0. 

\vspace{0.5cm}
\noindent {\bf Case \textit{b} $=$ 3.}\\
Finally, by writing 3 instead of b in the minimal generating set $G$ in Theorem~\ref{thm1}, we obtain
$$G=\big\{\xi_{11} = x_1^{2}-x_{0}x_{2},\;\; \varphi_{0} = x_{1}x_2-x_{0}y,\;\; \varphi_{1} = x_{2}^2-x_{1}y,\;\;  
\theta = y^{a+1}-x_{0}^{a+d+1}\big\}.$$ 
In the same manner, $LM(\xi_{11}) = x_1^{2}$, $LM(\varphi_{0})$ = $x_{1}x_{2}$, 
$LM(\varphi_{1}) = x_{2}^{2}$ and $LM( \theta)$ = $y^{a+1}$ with respect to
the negative degree reverse lexicographical ordering $>_{ds}$.

As in the previous cases, we begin with $\xi_{11}$ and $\varphi_{0}$ and this case is exactly the same as in $b=1$.

In the same manner,  $NF({\rm spoly}(\{\xi_{11},\varphi_{1})\,\vert\,G)$ = 0,
$NF({\rm spoly}(\xi_{11}, \theta)\,\vert\,G)$ = 0, $NF({\rm spoly}(\varphi_{0},\theta)\,\vert\,G)$ = 0 and
$NF({\rm spoly}(\varphi_{1},\theta)\,\vert\,G)$ = 0.

Finally, the computation of the S-polynomial of $\varphi_{0}$ and $\varphi_{1}$  also results 
as in the case $b=1$.

Therefore, if $b=$1,2 and 3, we conclude that the set $G$ is a standard basis
with respect to the negative degree reverse lexicographical ordering $>_{ds}$.

\begin{flushright}
$\Box$
\end{flushright}

We can now find the minimal generating set of the tangent cone by using the above lemma.

\begin{proposition}\label{prop1} Let $C$ be a certain monomial
curve having parametrization 
\[x_0=t^{m_0}, \; x_1=t^{m_1}, \; x_2=t^{m_2}, \; y=t^{m_3}\]
\noindent  $m_0=3a+b$ for positive integers $a \geq 1$ and $b \in [1,3]$
and $0 < m_0 < m_1 < m_2 < m_3$ form an arithmetic progression with common difference $d$
and let the generators of  the defining ideal $I(C)$ be given by the set $G$ in Theorem \ref{thm1}.
Then the defining ideal $I(C)^{*}$ of the tangent cone is generated by the set $G^{*}$ consisting of
the least homogeneous summands of the binomials in $G$.

\end{proposition}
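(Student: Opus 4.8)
The plan is to deduce the statement from the general correspondence between standard bases for a local degree ordering and the defining ideal of the tangent cone, feeding it the fact, already established in Lemma~\ref{lemma1}, that $G$ is a standard basis of $I(C)$ with respect to $>_{ds}$. The inclusion $(G^{*}) \subseteq I(C)^{*}$ is immediate: each $g \in G$ lies in $I(C)$, so its least homogeneous summand $g^{*}$ lies in $I(C)^{*}$ by the very definition of the tangent cone ideal. Thus the entire content is the reverse inclusion $I(C)^{*} \subseteq (G^{*})$, and the job is to turn the standard-basis property into a statement about generators of $I(C)^{*}$.

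First I would record the compatibility between the ordering and the grading. Because $>_{ds}$ is a negative degree ordering, the leading monomial $LM(f)$ of any $f$ is a monomial of \emph{smallest} degree occurring in $f$; hence $LM(f)=LM(f^{*})$ and $\deg LM(f)=\mathrm{ord}(f)$, where $\mathrm{ord}(f)$ denotes the least degree of a monomial of $f$. Next I would use the elementary but crucial fact that every homogeneous element of $I(C)^{*}$ of degree $e$ is itself of the form $f^{*}$ for some $f \in I(C)$ with $\mathrm{ord}(f)=e$ (sums and monomial multiples of initial forms are again initial forms, up to cancellation, and $I(C)^{*}$ is homogeneous). Combining these two observations, the leading ideal of $I(C)^{*}$ coincides with the leading ideal of $I(C)$: for $h=f^{*}\in I(C)^{*}$ one has $LM(h)=LM(f)\in L(I(C))$, and conversely each generator $LM(f)$ of $L(I(C))$ equals $LM(f^{*})$ with $f^{*}\in I(C)^{*}$. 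Since $G$ is a standard basis, $L(I(C))$ is generated by $\{LM(g):g\in G\}=\{LM(g^{*}):g\in G\}$. Therefore the homogeneous elements $g^{*}\in I(C)^{*}$ have leading monomials that generate $L(I(C)^{*})$. As $>_{ds}$ restricted to forms of a fixed degree compares monomials exactly as reverse lexicographic order does, the homogeneous ideal $I(C)^{*}$ may be handled by the ordinary Gröbner/standard-basis criterion, which then forces $G^{*}=\{g^{*}:g\in G\}$ to be a standard basis, and in particular a generating set, of $I(C)^{*}$. This yields $I(C)^{*}\subseteq(G^{*})$ and finishes the proof.

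The genuine labour has already been spent in Lemma~\ref{lemma1}, where the standard-basis property of $G$ was verified case by case for $b=1,2,3$; granting that, the present Proposition is a formal consequence of the tangent-cone interpretation of standard bases for local degree orderings (see \cite{greuel-pfister}), so one may alternatively state it as a direct citation rather than reprove the passage above. The point I would be most careful about is exactly that passage from leading monomials to generators: because $I(C)^{*}$ is a \emph{homogeneous} ideal and $>_{ds}$ agrees with reverse lexicographic order on equal-degree monomials, the apparent difficulty of working over a localization does not actually arise, and no well-ordering subtleties of the local order $>_{ds}$ intervene in the final Gröbner-basis step.
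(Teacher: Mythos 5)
Your proposal is correct and takes essentially the same approach as the paper: both deduce the Proposition from Lemma~\ref{lemma1} together with the general principle that the initial forms of a standard basis with respect to a local degree ordering generate the defining ideal of the tangent cone. The only difference is that the paper cites this principle directly as Lemma 5.5.11 of \cite{greuel-pfister}, whereas you re-derive it inline (correctly, via the equality $L(I(C)^{*})=L(I(C))$ and termination of the reduction within each fixed degree), which is more self-contained but mathematically the same route.
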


\noindent {\em Proof}. By the
Lemma~\ref{lemma1}, 
\[
G :=\left\{
\begin{array}{lr}
\{\xi_{11} \}  \cup \{ \varphi_{0}, \varphi_{1}\} \cup \{ \psi_{0}, \psi_{1}\} \cup \{ \theta\}& ~ if\;\;\; b=1, \\
\{\xi_{11} \}  \cup \{ \varphi_{0}, \varphi_{1}\} \cup \{ \psi_{0}\} \cup \{ \theta\}& ~ if\;\;\; b=2, \\
\{\xi_{11} \}  \cup \{ \varphi_{0}, \varphi_{1}\}  \cup \{ \theta\}& ~ if\;\;\; b=3.

\end{array}
\right.
\]
as in Theorem \ref{thm1}, is a standard basis of $I(C)$ with
respect to a local degree ordering $>_{ds}$ with respect to 
$x_0 > x_1 > x_2 > y$. Then, from \cite [Lemma 5.5.11]{greuel-pfister},
$I(C)^{*}$ is generated by the least homogeneous summands of the elements in the standard basis.
Thus, $I(C)^*$ is generated by\\
if $b=1$
$$G^{*}=\big\{\xi_{11}^{*} = x_1^{2}-x_{0}x_{2},\;\; \varphi_{0}^{*}= x_{1}x_2-x_{0}y,\;\; \varphi_{1}^{*}= x_{2}^2-x_{1}y,\;\; \psi_{0}^{*} = x_{1}y^{a}, $$ 
$$\psi_{1}^{*}= x_{2}y^{a},\;\; \theta^{*} = y^{a+1}\big\},$$
if $b=2$
$$G^{*}=\big\{\xi_{11} = x_1^{2}-x_{0}x_{2},\;\; \varphi_{0}^{*} = x_{1}x_2-x_{0}y,\;\; 
\varphi_{1} ^{*}= x_{2}^2-x_{1}y,\;\; \psi_{0}^{*} = x_{2}y^{a},  \theta^{*} = y^{a+1}\big\},$$ 
and if $b=3$
$$G^{*}=\big\{\xi_{11}^{*} = x_1^{2}-x_{0}x_{2},\;\; \varphi_{0}^{*} = x_{1}x_2-x_{0}y,\;\; 
\varphi_{1} ^{*}= x_{2}^2-x_{1}y,\;\;  \theta^{*} = y^{a+1}\big\}.$$ 

\begin{flushright}
$\Box$
\end{flushright}

\begin{theorem}\label{thm2} Let $C$ be a certain monomial
curve having parametrization 
\[x_0=t^{m_0}, \; x_1=t^{m_1}, \; x_2=t^{m_2}, \; y=t^{m_3}\]
\noindent  $m_0=3a+b$ for positive integers $a \geq 1$ and $b \in [1,3]$
and $0 < m_0 < m_1 < m_2 < m_3$ form an arithmetic progression with common difference $d$.
The certain monomial curve $C$ with the defining ideal $I(C)$ as in Theorem \ref{thm1}  has
Cohen-Macaulay tangent cone at the origin. 
\end{theorem}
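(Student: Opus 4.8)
The plan is to reduce Cohen--Macaulayness to a non-zerodivisor statement and then read that statement off the leading terms furnished by Lemma~\ref{lemma1}. Since $A=k[[t^{m_0},t^{m_1},t^{m_2},t^{m_3}]]$ is a one-dimensional domain it is automatically Cohen--Macaulay, and in $gr_m(A)\cong R/I(C)^{*}$ the class $x_0^{*}$ of the smallest generator $t^{m_0}$ is a homogeneous element of degree $1$. Because $\dim gr_m(A)=\dim A=1$, exhibiting a single homogeneous non-zerodivisor of degree $1$ forces $\operatorname{depth} gr_m(A)\geq 1=\dim gr_m(A)$, which is exactly Cohen--Macaulayness. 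Thus it suffices to prove that $x_0$ is a non-zerodivisor on $R/I(C)^{*}$.

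First I would record the leading monomials supplied by Lemma~\ref{lemma1}, where $G$ is shown to be a standard basis of $I(C)$ with respect to $>_{ds}$. In every case $b=1,2,3$ the leading monomials of the generators are
\[
LM(\xi_{11}) = x_1^{2},\quad LM(\varphi_0) = x_1 x_2,\quad LM(\varphi_1) = x_2^{2},\quad LM(\theta) = y^{a+1},
\]
together with $LM(\psi_0),LM(\psi_1)\in\{x_1 y^{a},\,x_2 y^{a}\}$ whenever the relevant $\psi$'s occur. The crucial observation is that \emph{none} of these is divisible by $x_0$. Since $>_{ds}$ is a local degree ordering, the leading monomial of each generator lies in its least homogeneous summand, so $LM(g)=LM(g^{*})$ for every $g\in G$; combined with Proposition~\ref{prop1} this shows that the initial forms $G^{*}$ form a standard basis of the homogeneous ideal $I(C)^{*}$ and that its leading ideal $L_{>_{ds}}(I(C)^{*})$ is the monomial ideal generated by the monomials above, still free of $x_0$.

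Next I would pass to this initial ideal. As $L_{>_{ds}}(I(C)^{*})$ involves only $x_1,x_2,y$, the variable $x_0$ is patently a non-zerodivisor on the monomial quotient $R/L_{>_{ds}}(I(C)^{*})$, since multiplication by $x_0$ carries standard monomials to standard monomials injectively. I would then transfer this regularity back to $I(C)^{*}$ by the standard flat-degeneration argument: $R/I(C)^{*}$ and $R/L_{>_{ds}}(I(C)^{*})$ have the same Hilbert function, so comparing in each degree the kernels and cokernels of multiplication by $x_0$ forces the kernel on $R/I(C)^{*}$ to vanish as well. Hence $x_0$ is a non-zerodivisor on $R/I(C)^{*}=gr_m(A)$, and the tangent cone is Cohen--Macaulay uniformly in the three cases $b=1,2,3$.

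The main obstacle is precisely the justification that regularity of $x_0$ may be read off the initial ideal: one must be careful that $>_{ds}$ is a local, degree-reversing ordering, so the comparison of Hilbert functions and the Gröbner-deformation inequalities have to be run on the \emph{homogeneous} ideal $I(C)^{*}$, where within each graded piece $>_{ds}$ coincides with an ordinary reverse-lexicographic order and the usual deformation estimates apply. Everything else is immediate: the enumeration of the leading terms and their independence of $x_0$ come straight from Lemma~\ref{lemma1}, and the reduction of Cohen--Macaulayness of the tangent cone to the non-zerodivisor property of $x_0^{*}$ is the classical criterion for one-dimensional Cohen--Macaulay local rings.
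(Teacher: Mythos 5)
Your proof is correct, and it takes a more self-contained route than the paper, although both rest on the same underlying criterion. The paper's proof is essentially a citation: it lists the generators $G^{*}$ from Proposition~\ref{prop1}, asserts without computation that in each of the cases $b=1,2,3$ these form a Gr\"obner basis for the \emph{global} reverse lexicographic order with $x_0>y>x_1>x_2$, notes that $x_0$ divides none of the leading monomials, and then invokes Theorem 2.1 of \cite{arslan1} to conclude that the tangent cone is Cohen--Macaulay. You instead prove that criterion rather than quote it: the reduction of Cohen--Macaulayness to ``$x_0$ is a non-zerodivisor on $gr_m(A)$'' via $\mathrm{depth}\, gr_m(A)\geq 1=\dim gr_m(A)$, and the transfer of regularity of $x_0$ from the monomial quotient $R/L(I(C)^{*})$ back to $R/I(C)^{*}$; moreover, by staying with $>_{ds}$ you reuse the leading monomials already computed in Lemma~\ref{lemma1} instead of requiring a second, unverified Gr\"obner basis claim for a different ordering. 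Two small points would tighten your write-up. (i) The statement that $G^{*}$ is a standard basis of $I(C)^{*}$ does not follow formally from Proposition~\ref{prop1} together with $LM(g)=LM(g^{*})$, since a generating set need not be a standard basis; the correct justification is the standard fact that, for a degree-compatible local ordering, every homogeneous element of $I(C)^{*}$ is the initial form of an element of $I(C)$, so the leading ideals of $I(C)$ and $I(C)^{*}$ coincide and Lemma~\ref{lemma1} gives $L(I(C)^{*})=\langle LM(g): g\in G\rangle$; this is part of the Greuel--Pfister machinery you already cite. (ii) The caution in your final paragraph is unnecessary: the implication you actually use (non-zerodivisor modulo the leading ideal implies non-zerodivisor modulo the ideal) holds for \emph{any} monomial ordering, because $L(I(C)^{*}\colon x_0)\subseteq L(I(C)^{*})\colon x_0=L(I(C)^{*})$ forces $I(C)^{*}\colon x_0$ and $I(C)^{*}$ to have the same Hilbert function and hence to be equal; the reverse-lexicographic hypothesis with $x_0$ last is needed only for the converse implication, which you never use. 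The cost of your route is length; what it buys is independence from \cite{arslan1} and from any additional Gr\"obner basis verification.
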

\begin{proof}
We  can apply the Theorem 2.1 in \cite{arslan1} to the generators of the
tangent cone which are given by the set\\
if $b=1$
$$G^{*}=\big\{\xi_{11}^{*} = x_1^{2}-x_{0}x_{2},\;\; \varphi_{0}^{*}= x_{1}x_2-x_{0}y,\;\; 
\varphi_{1}^{*}= x_{2}^2-x_{1}y,\;\; \psi_{0}^{*} = x_{1}y^{a}, $$ 
$$\psi_{1}^{*}= x_{2}y^{a},\;\; \theta^{*} = y^{a+1}\big\},$$
if $b=2$
$$G^{*}=\big\{\xi_{11} = x_1^{2}-x_{0}x_{2},\;\; \varphi_{0}^{*} = x_{1}x_2-x_{0}y,\;\;
\varphi_{1} ^{*}= x_{2}^2-x_{1}y,\;\; \psi_{0}^{*} = x_{2}y^{a},  \theta^{*} = y^{a+1}\big\},$$ 
and if $b=3$
$$G^{*}=\big\{\xi_{11}^{*} = x_1^{2}-x_{0}x_{2},\;\; \varphi_{0}^{*} = x_{1}x_2-x_{0}y,\;\; 
\varphi_{1} ^{*}= x_{2}^2-x_{1}y,\;\;  \theta^{*} = y^{a+1}\big\}.$$ 
All of these sets are
Gr\"{o}bner bases with respect to the reverse lexicographic order
with $x_0>y>x_1>x_2$. Since $x_0$ does not divide the leading
monomial of any element in $G^{*}$ in all three cases, the ring
$k[x_0,x_1,x_2,y]/I(C)^*$ is Cohen-Macaulay from Theorem 2.1
in \cite{arslan1}. Thus, $R={\rm gr}_m(k[[t^{m_0},t^{m_1},t^{m_2},t^{m_3}]])\cong
k[x_0,x_1,x_2,y]/I(C)^{*}$ is Cohen-Macaulay.
\end{proof}

\section{Minimal free resolution of the associated graded ring}
In this section, we study the minimal free resolution of  
${\rm gr}_m(k[[t^{m_0},t^{m_1},t^{m_2},t^{m_3}]])$ of the certain monomial curve $C$ in affine 4-space.

\begin{theorem}\label{thm3} 
Let $C$ be a certain affine monomial curve in $\mathbb{A}^{4}$ 
having parametrization 
\[x_0=t^{m_0}, \; x_1=t^{m_1}, \; x_2=t^{m_2}, \; y=t^{m_3}\]
\noindent  $m_0=3a+b$ for positive integers $a \geq 1$ and $b \in [1,3]$
and $0 < m_0 < m_1 < m_2 < m_3$ form an arithmetic progression with common difference $d$.
Then the sequence of R-modules
\begin{center}
$0 \longrightarrow R^{\beta_{3}(b)} \xrightarrow{\phi_{3}(b)} R^ {\beta_{2}(b)} \xrightarrow{\phi_{2}(b)}  R^ {\beta_{1}(b)} \xrightarrow{\phi_{1}(b)} R \xrightarrow{\phi}\ G \longrightarrow 0$
\end{center}
is a minimal free resolution for the tangent cone of $C$, where

\begin{eqnarray}
\nonumber
\beta_{1}(b) =\left\{
\begin{array}{ll}
6& if\;\; b=1, \\
5& if\;\; b=2, \\
4& if\;\; b=3,
\end{array}
\right.
,& 
\beta_{2}(b) =\left\{
\begin{array}{ll}
8& if\;\; b=1, \\
5& if\;\; b=2, \\
5& if\;\; b=3,
\end{array}
\right.
,&
\beta_{3}(b) =\left\{
\begin{array}{ll}
3& if\;\; b=1, \\
1& if\;\; b=2, \\
2& if\;\; b=3.
\end{array}
\right.
\end{eqnarray}

\noindent and $\phi$'s denote the canonical surjections and the maps between R-modules depend on b 

{\footnotesize
$$
\phi_{1}(b=1)=\Big(
\begin{matrix}
g_1=x_{1}^{2}-x_{0}x_{2} &
g_2=x_{1}x_{2}-x_{0}y &
g_3=x_{2}^{2}-x_{1}y &
g_4=x_{1}y^{a}&
g_5=x_{2}y^{a}&
g_6=y^{a+1}
\end{matrix}\Big)
$$

$$
\phi_{2}(b=1)=
\begin{pmatrix}
x_2 & y^{a} & -y & 0 & 0 & 0 & 0 & 0 \\
-x_1& 0 & x_2 & y^{a} & 0 & 0 & 0 & 0 \\
x_0  & 0 & -x_1 & 0 & y^{a} & 0 & 0 & 0 \\
0 & -x_1 & 0 & -x_2 &  0 & x_2 & y & 0 \\
0 & x_0 & 0  & 0  &  -x_2 & -x_1 & 0 & y \\
0 & 0 & 0 & x_0  &  x_1 & 0 &  -x_1&  -x_2 \\
\end{pmatrix},
$$

$$
\phi_{3}(b=1)=
\begin{pmatrix}
y^{a} & 0 & 0 \\
-x_2& y & 0 \\
0 & y^{a} & 0  \\
 x_1 & -x_2 & 0 \\
 -x_0 & x_1 & 0 \\
 0 & -x_2 &  y \\
  0 & x_1 &  -x_2 \\
 0 & -x_0 &  x_1 \\
\end{pmatrix},
$$

}

{\footnotesize

$$
\phi_{1}(b=2)=\Big(
\begin{matrix}
g_1=x_{1}^{2}-x_{0}x_{2} &
g_2=x_{1}x_{2}-x_{0}y &
g_3=x_{2}^{2}-x_{1}y &
g_4=x_{2}y^{a}&
g_5=y^{a+1}
\end{matrix}\Big)
$$

$$
\phi_{2}(b=2)=
\begin{pmatrix}
x_2  & -y & 0 & 0 & 0  \\
-x_1&  x_2 & y^{a} & 0 & 0 \\
x_0  & -x_1 & 0 & y^{a} & 0  \\
0 & 0 &  -x_1 & -x_2 &  y  \\
0 & 0 &   x_0 & x_1 &  -x_2 \\
\end{pmatrix},
$$

$$
\phi_{3}(b=2)=
\begin{pmatrix}
g_{5}=y^{a+1}  \\
g_{4}=x_2y^{a}  \\
-g_{3}=-x_{2}^{2}+x_{1}y  \\
g_{2}= x_{1} x_{2}-x_{0}y \\
g_{1}=x_{1}^{2}-x_{0}x_{2} \\
\end{pmatrix},
$$

}

{\footnotesize

$$
\phi_{1}(b=3)=\Big(
\begin{matrix}
g_1=x_{1}^{2}-x_{0}x_{2} &
g_2=x_{1}x_{2}-x_{0}y &
g_3=x_{2}^{2}-x_{1}y &
g_4=y^{a+1}
\end{matrix}\Big)
$$

$$
\phi_{2}(b=3)=
\begin{pmatrix}
x_2 & y^{a+1} & -y & 0 & 0  \\
-x_1& 0 & x_2 & y^{a+1} & 0  \\
x_0  & 0 & -x_1 & 0 & y^{a+1}  \\
0 & -x_{1}^{2}+ x_{0}x_{2} & 0 & -x_{1}x_{2}+x_{0}y & - x_{2}^{2}+ x_{1}y  \\
\end{pmatrix},
$$

$$
\phi_{3}(b=3)=
\begin{pmatrix}
y^{a+1} & 0  \\
-x_2 & y  \\
0 & y^{a+1}   \\
 x_1 & -x_2  \\
 -x_0 & x_1 \\
\end{pmatrix},
$$

}

\end{theorem}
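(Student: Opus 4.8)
The plan is to construct the resolution from the bottom up and then certify it with the Buchsbaum--Eisenbud acyclicity criterion. The first map is forced on us: by Proposition~\ref{prop1} the ideal $I(C)^*$ is minimally generated by the set $G^*$, so $\phi_1(b)$ is the row whose entries are the generators $g_1,\ldots,g_{\beta_1(b)}$ displayed in the statement, and $\operatorname{coker}\phi_1(b)=R/I(C)^*=G$. To obtain the higher maps explicitly I would invoke Schreyer's theorem: since $G^*$ is the reverse lexicographic Gr\"obner basis exhibited in the proof of Theorem~\ref{thm2}, Schreyer's construction produces an explicit generating set for the first syzygy module out of the leading-term relations of the $S$-polynomials of the $g_i$, and these relations are exactly the columns of $\phi_2(b)$. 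Iterating the construction on the resulting syzygy Gr\"obner basis yields $\phi_3(b)$, accounting for all three displayed matrices in each case $b=1,2,3$.

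With the matrices in hand the verification splits into three parts, done separately for $b=1,2,3$. First, that the displayed sequence is a complex is the direct check $\phi_1(b)\,\phi_2(b)=0$ and $\phi_2(b)\,\phi_3(b)=0$ by matrix multiplication. Second, for exactness I would apply the Buchsbaum--Eisenbud criterion \cite{eisenbud-buchsbaum}: writing $r_i$ for the expected rank of $\phi_i(b)$, one verifies the rank conditions $r_i+r_{i+1}=\beta_i(b)$ and then the grade conditions $\operatorname{depth} I_{r_i}(\phi_i(b))\ge i$, where $I_{r_i}$ is the ideal of $r_i\times r_i$ minors. Here Theorem~\ref{thm2} is decisive: because $G$ is Cohen--Macaulay, depth over the polynomial ring $R$ coincides with height for these determinantal ideals, so the grade conditions reduce to height estimates that can be read off the shape of the matrices. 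Third, minimality is immediate, since every entry of each $\phi_i(b)$ lies in the maximal ideal $(x_0,x_1,x_2,y)$, so no unit survives any change of basis.

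I expect the main obstacle to be the height (grade) computations in the Buchsbaum--Eisenbud step, most of all for $\phi_2(b)$ and $\phi_3(b)$ when $b=1$, where the matrices are largest and the minor ideals most intricate. The rank bookkeeping itself is rigid: since $G$ is Cohen--Macaulay of dimension one and $\operatorname{depth}R=4$, the Auslander--Buchsbaum formula forces $\operatorname{pd}_R G=3$, which matches the length of the resolution and pins down the expected ranks, here $r_1=1$, $r_2=\beta_1(b)-1$, $r_3=\beta_3(b)$. The genuine work is to confirm that the relevant minor ideals have height at least $1$, $2$, and $3$ respectively; I would do this by exhibiting explicit minors that cannot all lie in a prime of too small a height, using that $x_0$ is a nonzerodivisor on $G$ (it divides no leading monomial of $G^*$, by the proof of Theorem~\ref{thm2}). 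For $b=2$ the symmetry $(\beta_0,\beta_1,\beta_2,\beta_3)=(1,5,5,1)$ together with the fact that $\phi_3(b=2)$ is the column of generators signals a Gorenstein (self-dual) resolution, which I would exploit to shorten the grade checks in that case.
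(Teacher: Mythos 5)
Your proposal follows essentially the same route as the paper: the matrices are obtained from Schreyer's theorem applied to the standard/Gr\"obner basis, the sequence is checked to be a complex, exactness is certified by the Buchsbaum--Eisenbud criterion through the rank conditions $r_1=1$, $r_2=\beta_1(b)-1$, $r_3=\beta_3(b)$ and grade conditions witnessed by explicit minors (the paper exhibits, e.g., $x_0g_6^2$ and $x_1g_2^2$ among the $5\times5$ minors of $\phi_2(1)$), and minimality is immediate since every matrix entry lies in the maximal ideal. The only slip, which does not affect validity, is your attribution of ``grade equals height'' to the Cohen--Macaulayness of $G$; it holds because $R$ itself is a polynomial ring (hence Cohen--Macaulay), while the Cohen--Macaulayness of $G$ enters, as you otherwise correctly use it, only to force $\operatorname{pd}_R G=3$ via Auslander--Buchsbaum.
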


\begin{proof} We will prove the theorem for the three cases, $b=$ 1, 2, and 3.\\

\noindent {\bf Case \textit{b} $=$ 1.}\\
It is easy to show that $\phi_{1}(1)\phi_{2}(1)=\phi_{2}(1)\phi_{3}(1)=0$
which proves that the above sequence is a complex.
To prove the exactness, we use Corollary 2 of Buchsbaum-Eisenbud theorem in \cite{eisenbud-buchsbaum}.
We have to show that rank $\phi_{1}(1)=1$, rank $\phi_{2}(1)=5$ and  rank $\phi_{3}(1)=3$, and also that 
$I(\phi_{i}(1))$ contains a regular sequence of length $i$ for all $1 \leq i \leq 3$. rank $\phi_{1}(1)=1$ is trivial.
We want to show that rank $\phi_{2}(1)=5$. Since the columns of the matrix $\phi_{2}(1)$ are related by the 
generators of the defining ideal $I(C)$,
note that all $6 \times 6$ minors of  $\phi_{2}(1)$ are zero.  $\phi_{2}(1)$ has a non zero divisor in the kernel.
By McCoy's theorem rank $\phi_{2}(1) \leq 5$. The determinants of $5 \times 5$ minors of $\phi_{2}(1)$ are
$x_{0}g_{6}^{2}$ when the 6th row and the columns 3, 5 and 6 are deleted, and $x_{1}g_{2}^{2}$ when
the 2nd row and the columns 2, 5 and 8 are deleted. Since $\{x_{0}g_{6}^{2}, x_{1}g_{2}^{2}\}$ are relatively prime,  $I(\phi_{2}(1))$ contains  a regular sequence of length 2.
Also, among the $3 \times 3$ minors of $\phi_{3}(1)$, we have $\{-x_{0}g_{1}, -x_{1}g_{2},-x_{2}g_{3}\}$. They are relatively prime, so $I(\phi_{3}(1))$ contains a regular sequence of length 3.\\

\noindent {\bf Case \textit{b} $=$ 2.}\\
Clearly $\phi_{1}(2)\phi_{2}(2)=\phi_{2}(2)\phi_{3}(2)=0$ and rank $\phi_{1}(2)=1$ and  rank $\phi_{3}(2)=1$.
We have to show that  rank $\phi_{2}(2)=4$ and $I(\phi_{i}(1))$ contains a regular sequence of length $i$ for all $1 \leq i \leq 3$. Among the $4 \times 4$ minors of $\phi_{2}(2)$, $I(\phi_{2}(2))$ contains 
$\{-g_{1}^2, -g_{2}^2\}$.  These two determinants constitute a regular sequence of length 2, since they are relatively prime. \\

\noindent {\bf Case \textit{b} $=$ 3.}\\
As in the previous cases, we have to show that rank $\phi_{1}(3)=1$, rank $\phi_{2}(3)=3$ and  rank $\phi_{3}(3)=2$, and also that 
$I(\phi_{i}(1))$ contains a regular sequence of length $i$ for all $1 \leq i \leq 3$. rank $\phi_{1}(3)=1$ is trivial.
We have to show that rank $\phi_{2}(3)=3$.  $\phi_{2}(3)$ has a non zero divisor in the kernel.
By McCoy's theorem rank $\phi_{2}(3) \leq 3$. Among the $3 \times 3$ minors of $\phi_{2}(3)$, $I(\phi_{2}(3))$ contains 
$\{g_{1}^2, g_{2}^2\}$ which is a regular sequence of length 2, since they are relatively prime. 
Also, among the $2 \times 2$ minors of $\phi_{3}(3)$, we have $\{g_{1}, -g_{2}, g_{3}\}$. They are 
relatively prime, so $I(\phi_{3}(3))$ contains a regular sequence of length 3.

\end{proof}

\begin{corollary} Under the hypothesis of Theorem 3.1., the minimal graded free resolution of the associated graded ring $G$ is given by

if $b=$1
\begin{center}
$0 \longrightarrow  R(-(a+3))^{3}  \xrightarrow{\phi_{3}(b)}  
R(-3)^{2}  \bigoplus R(-(a+2))^{6} \xrightarrow{\phi_{2}(b)} R(-2)^{3}\bigoplus R(-(a+1))^{3}  
\xrightarrow{\phi_{1}(b)} R $
\end{center}

if $b=$2
\begin{center}
$0 \longrightarrow  R(-(a+4)) \xrightarrow{\phi_{3}(b)}  R(-3)^2 \bigoplus R(-(a+2))^{3}    \xrightarrow{\phi_{2}(b)} R(-2)^{3}\bigoplus R(-(a+1))^{2}  
\xrightarrow{\phi_{1}(b)} R $
\end{center}

if $b=$3
\begin{center}
$0 \longrightarrow  R(-(a+4))^{2}  \xrightarrow{\phi_{3}(b)}  R(-3)^{2}  \bigoplus  R(-(a+3))^{3}  \xrightarrow{\phi_{2}(b)} R(-2)^{3}\bigoplus R(-(a+1))  
\xrightarrow{\phi_{1}(b)} R $
\end{center}

\end{corollary}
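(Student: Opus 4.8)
The plan is to recognize that this Corollary is not a new resolution but a graded refinement of the minimal free resolution already produced in Theorem~\ref{thm3}. I would start from the complex $0 \to R^{\beta_3(b)} \to R^{\beta_2(b)} \to R^{\beta_1(b)} \to R \to G \to 0$ and equip each free module with the degree shifts that turn every $\phi_i(b)$ into a homogeneous (degree-zero) map of graded $R$-modules. Since $G = R/I(C)^*$ and $I(C)^*$ is a homogeneous ideal — its generators $G^*$ from Proposition~\ref{prop1} are homogeneous with $\deg x_0 = \deg x_1 = \deg x_2 = \deg y = 1$ — the ring $G$ is standard graded, so the minimal free resolution of Theorem~\ref{thm3} carries a grading unique up to isomorphism; the only task is to compute the twists.

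First I would read off the first free module. The map $\phi_1(b)$ sends the $i$-th basis vector to the generator $g_i$, so $R^{\beta_1(b)}$ must be twisted so that its $i$-th summand is $R(-\deg g_i)$. From Proposition~\ref{prop1}, $\xi_{11}^*,\varphi_0^*,\varphi_1^*$ have degree $2$ while each of $\psi_0^*,\psi_1^*,\theta^*$ has degree $a+1$; this gives $R(-2)^3 \oplus R(-(a+1))^{e}$ with $e=3,2,1$ for $b=1,2,3$, matching the stated resolutions. Next I would propagate the twists through the higher maps using the homogeneity constraint: for a homogeneous differential, every nonzero entry in row $s$ (of the target summand with twist $d_s$) and column $t$ (of the source summand with twist $e_t$) must satisfy $\deg(\text{entry}) + d_s = e_t$. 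Reading each column of $\phi_2(b)$ and $\phi_3(b)$ then forces the source twists. For $b=1$, the two columns of $\phi_2(1)$ with entries among $x_0,x_1,x_2$ map degree-$2$ targets and acquire twist $3$, while the remaining six columns — whose nonzero entries are either $y^a$ into a degree-$2$ target or degree-$1$ monomials into degree-$(a+1)$ targets — all acquire twist $a+2$; likewise every column of $\phi_3(1)$ acquires twist $a+3$. The analogous bookkeeping produces $R(-(a+4))$ for $b=2$ and $R(-(a+4))^2$ for $b=3$.

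The step I expect to require the most care is verifying the internal consistency of this twist assignment for $b=2$ and $b=3$, where several matrix entries are not monomials but the full binomial generators $g_i$ (degree $2$) or the power $y^{a+1}$ (degree $a+1$), sitting in the same column as degree-$1$ entries. There one must check that a degree-$2$ binomial entering a degree-$(a+2)$ (resp.\ degree-$(a+3)$) target summand and a degree-$(a+1)$ monomial entering a degree-$3$ target summand yield the same source twist $a+4$; this is exactly the homogeneity of $\phi_3(2)$ and $\phi_3(3)$, and it is what guarantees these differentials are well defined as graded maps.

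Minimality needs no further argument: it is already established in Theorem~\ref{thm3}, and it is visible here from the fact that every entry of every $\phi_i(b)$ is homogeneous of positive degree, hence lies in the graded maximal ideal $\mathfrak{m}=(x_0,x_1,x_2,y)$, so no differential contains a unit. Assembling the twisted modules then yields precisely the three graded complexes in the statement, and computing alternating sums of the twisted Betti numbers gives the Hilbert series, from which the Hilbert function of the tangent cone follows at once.
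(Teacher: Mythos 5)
Your proposal is correct and is exactly the argument the paper intends: the Corollary is stated with no written proof because it is the routine degree bookkeeping you carry out, namely assigning twists $R(-2)^3\oplus R(-(a+1))^{e}$ from the degrees of the generators in Proposition~\ref{prop1} and propagating them through $\phi_2(b)$ and $\phi_3(b)$ so that the differentials of Theorem~\ref{thm3} become homogeneous of degree zero. Your column-by-column verification (including the mixed binomial/monomial columns for $b=2,3$ all forcing twist $a+4$) matches the stated graded resolutions, and minimality is indeed inherited from Theorem~\ref{thm3}.
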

\begin{flushright}
$\Box$
\end{flushright}
If $H_{G}(i)=dim_{k}(m^{i}/m^{i+1})$ is the Hilbert function of $G$, then 

\begin{corollary} Under the hypothesis of Theorem 3.1., the Hilbert function of the associated graded ring $G$ is given by\\

if $b=$ 1\\

\noindent $H_{G}(i)= \left(\!\!\!\begin{array}{c} i+3 \\ 3 \end{array}\!\!\! \right)
-3\left(\! \!\! \begin{array}{c} i+1 \\ 3 \end{array}\!\!\! \right)
-3\left(\!\!\! \begin{array}{c} i-a+2 \\ 3\end{array}\!\!\! \right)
+2 \left(\!\!\! \begin{array}{c} i \\ 3 \end{array}\!\!\! \right)
+6\left(\!\!\! \begin{array}{c} i-a+1 \\ 3 \end{array}\!\!\! \right)
-3\left(\!\!\! \begin{array}{c} i-a \\ 3 \end{array}\!\!\! \right)$\\

if $b=$ 2\\

\noindent $H_{G}(i)= \left(\!\!\! \begin{array}{c} i+3 \\ 3 \end{array}\!\!\! \right)
-3\left(\!\!\! \begin{array}{c} i+1 \\ 3 \end{array}\!\!\! \right)
-2\left(\!\!\! \begin{array}{c} i-a+2 \\ 3\end{array}\!\!\! \right)
+2 \left(\!\!\! \begin{array}{c} i \\ 3 \end{array}\!\!\! \right)
+3\left(\!\!\! \begin{array}{c} i-a+1 \\ 3 \end{array}\!\!\! \right)
-\left(\!\!\! \begin{array}{c} i-a-1 \\ 3 \end{array}\!\!\! \right)$\\

if b=3\\

\noindent $H_{G}(i)= \left(\!\!\! \begin{array}{c} i+3 \\ 3 \end{array}\!\!\! \right)
-3\left(\!\!\! \begin{array}{c} i+1 \\ 3 \end{array}\!\!\! \right)
-\left(\!\!\! \begin{array}{c} i-a+2 \\ 3\end{array}\!\!\! \right)
+2\left(\!\!\! \begin{array}{c} i \\ 3 \end{array}\!\!\! \right)
+3\left(\!\!\! \begin{array}{c} i-a \\ 3 \end{array}\!\!\! \right)
-2\left(\!\!\! \begin{array}{c} i-a-1 \\ 3 \end{array}\!\!\! \right)$

\end{corollary}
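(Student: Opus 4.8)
The plan is to read the Hilbert function directly off the minimal graded free resolution established in the preceding corollary, exploiting the additivity of the Hilbert function along an exact sequence of graded modules. Recall first that $G = {\rm gr}_m(A) \cong R/I(C)^{*}$ is a standard graded $k$-algebra over the polynomial ring $R = k[x_0,x_1,x_2,y]$ in four variables, so its degree-$i$ graded piece $G_i$ is exactly $m^{i}/m^{i+1}$ and hence $H_G(i) = \dim_k G_i$. The grading shifts in the resolution are chosen precisely so that each differential $\phi_k(b)$ is homogeneous of degree zero, which is what makes the restriction to a fixed degree well-behaved.

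The key observation is that if
$$0 \longrightarrow F_3 \xrightarrow{\phi_3(b)} F_2 \xrightarrow{\phi_2(b)} F_1 \xrightarrow{\phi_1(b)} R \xrightarrow{\phi} G \longrightarrow 0$$
is the exact sequence of graded $R$-modules from the previous corollary, then taking its degree-$i$ component yields an exact sequence of finite-dimensional $k$-vector spaces. The alternating sum of dimensions therefore vanishes, giving
$$H_G(i) = \dim_k R_i - \dim_k (F_1)_i + \dim_k (F_2)_i - \dim_k (F_3)_i.$$
The only building block required is the Hilbert function of a twisted free module: since $R$ has four variables, $\dim_k\bigl(R(-j)\bigr)_i = \dim_k R_{i-j} = \binom{i-j+3}{3}$, the number of monomials of degree $i-j$ in $x_0,x_1,x_2,y$, with the convention that this binomial coefficient is $0$ when $i-j<0$.

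It then remains to substitute. For each value of $b$, I would read the twists $-j$ together with their multiplicities $\beta_{k,j}$ off the appropriate graded resolution and assemble $H_G(i)$ term by term. For instance, when $b=1$ the free modules contribute $\binom{i+3}{3}$ from $R$; $-3\binom{i+1}{3}$ and $-3\binom{i-a+2}{3}$ from $R(-2)^{3}\oplus R(-(a+1))^{3}$; $+2\binom{i}{3}$ and $+6\binom{i-a+1}{3}$ from $R(-3)^{2}\oplus R(-(a+2))^{6}$; and $-3\binom{i-a}{3}$ from $R(-(a+3))^{3}$. Summing these reproduces the displayed formula, and the cases $b=2$ and $b=3$ follow by the identical method with the twists and Betti numbers taken from their respective resolutions.

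Since every ingredient is already available, there is no genuine obstacle here: the computation reduces to the additivity principle above together with the elementary Hilbert function of a polynomial ring in four variables. The only care required is in bookkeeping the shifts $j$ and the multiplicities $\beta_{k,j}$ correctly, and in respecting the vanishing convention $\binom{n}{3}=0$ for $n<3$, which is exactly what guarantees that the small-degree values of $H_G$ come out correctly.
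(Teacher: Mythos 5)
Your proposal is correct and is precisely the argument the paper leaves implicit: the corollary is read off the minimal graded free resolution of the preceding corollary by taking degree-$i$ pieces and using additivity of dimensions along the exact sequence, with $\dim_k R(-j)_i=\binom{i-j+3}{3}$. Your bookkeeping of twists and Betti numbers reproduces all three displayed formulas exactly, so there is nothing to add.
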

\vspace{5mm}

\bibliographystyle{amsplain}

\begin{thebibliography}{90}

\bibitem{arslan1} Arslan SF. Cohen-Macaulayness of tangent cones.
Proc. Amer. Math. Soc. 2000; 128 : 2243-2251.

\bibitem{barucci-froberg-sahin} Barucci V, Fr\"{o}berg R, \c{S}ahin M. On free resolutions of some semigroup rings. J. Pure and Appl. Algebra 2014;  218 (6) : 1107-1116.


\bibitem{eisenbud-buchsbaum}  Buchsbaum D, Eisenbud D. What makes a complex exact? Journal of Algebra  1973; 25 : 259-268.

\bibitem{gimenez-sengupta-srinivasan} Gimenez P, Sengupta I, Srinivasan H. Minimal free resolutions for certain affine monomial curves.  Contemporary Mathematics, 2011; 555 : 87-95.

\bibitem{greuel-pfister} Greuel G-M, Pfister G. A Singular Introduction to Commutative
Algebra. Springer-Verlag, 2002.

\bibitem {singular} Decker, W,  Greuel, G-M,  Pfister, G, Sch{\"o}nemann, H.
\newblock {\sc Singular} {4-1-0} --- {A} computer algebra system for polynomial computations.
\newblock {http://www.singular.uni-kl.de} (2016).

\bibitem{molinelli-tamone1}  Molinelli S, Tamone G. On the Hilbert function of certain rings of monomial curves.
 J. Pure and Appl. Algebra 1995; 101 (2) : 191-206.

\bibitem{molinelli-tamone2} Molinelli S, Tamone G. On the Hilbert function of certain non-Cohen-Macaulay one dimensional rings. Rocky Mountain J. Math. 1999; 29 (1) : 271-300.

\bibitem{molinelli-patil-tamone}  Molinelli, S, Patil DP, Tamone G. On the Cohen-Macaulayness of the associated graded ring of certain monomial curves.  Beitrage Algebra Geom. 1998; 39 (2): 433-446.

\bibitem{oneto-tamone}  Oneto A,  Tamone G. Explicit minimal resolution for certain
monomial curves, arXiv:1312.0789 [math.AC].


\bibitem{patil} Patil DP.  Minimal sets of generators for the relation ideals of certain
monomial curves. Manuscripta Math. 1993; 80 : 239-248.


\bibitem{rossi-sharifan}  Rossi ME, Sharifan L.Minimal free resolution of a finitely generated module
over a regular local ring. Journal of Algebra 2009; 322 (10) : 3693-3712.


\bibitem{rossi-valla} Rossi ME,  Valla G.  Hilbert functions of filtered modules. Lecture Notes
the Unione Matematica Italiana , Volume 9, Springer, 2010.

\bibitem{sengupta} Sengupta I. A minimal free resolution for certain monomial curves in $\mathbb{A}^{4}$.
Comm. in Algebra 2003; 31(6) : 2791-2809.

\bibitem{sengupta2}  Sengupta I. A Gr\"{o}bner basis for certain affine monomial curves.
Comm. in Algebra 2003 ; 31(3) : 1113-1129.

\bibitem{sharifan-nahandi} Sharifan L, Zaare-Nahandi R.  Minimal free resolution of the associated
graded ring of monomial curves of generalized arithmetic sequences.
J. of Pure and Appl.  Algebra 2009; 213 : 360-369.



\end{thebibliography}

\end{document}